\newcommand{\N}{\mathbb N}
\newcommand{\ZZ}{\mathbb{Z}}
\newcommand{\kk}{\Bbbk}
\newcommand{\upto}{,\ldots ,}
\DeclareMathOperator{\Lead}{L}
\DeclareMathOperator{\LeadM}{LM}
\DeclareMathOperator{\Pol}{Pol}
\DeclareMathOperator{\charakt}{char}
\def\SL{\operatorname{SL}}
\def\SL2{\operatorname{SL}_{2}(K)}
\def\GL2{\operatorname{GL}_{2}(K)}
\def\INVSL2{$K[V]^{operatorname{SL}_{2}(K)}$}
\def\INVSO2{$K[V]^{operatorname{SO}_{2}(K)}$}
\def\INVGL2{$K[V]^{operatorname{GL}_{2}(K)}$}
\def\GL{\operatorname{GL}}
\def\SL{\operatorname{SL}}
\def\N{\mathbb{N}}
\def\Tr{\operatorname{Tr}}
\newtheorem{Lemma}{Lemma}
\newtheorem{Theorem}[Lemma]{Theorem}
\newtheorem{prop}[Lemma]{Proposition}
\newtheorem*{Corollary of Conjecture}{Corollary of Conjecture}
\theoremstyle{definition}
\theoremstyle{remark}
  \newtheorem{rem}[Lemma]{Remark}
\newtheoremstyle{Acknowledgments}
  {}
    {}
     {}
     {}
    {\bfseries}
    {}
     {.5em}
     {\thmname{#1}\thmnumber{ }\thmnote{ (#3)}}
\theoremstyle{Acknowledgments}
\title[Vector invariants of permutation groups in characteristic zero]{Vector invariants of permutation groups in characteristic zero}
\author{Fabian Reimers}
 \address{Technische Universit\"at M\"unchen, Zentrum Mathematik - M11, 
Boltzmannstr.~3, 85748 Garching, Germany}
\email{reimers@ma.tum.de}
\author{M\"{u}fit Sezer}
\address{Bilkent University, Department of Mathematics\\
Cankaya, Ankara \\06800 Turkey } \email{sezer@fen.bilkent.edu.tr}
\date{\today}
\subjclass[2010]{13A50}
\keywords{Invariant theory,    permutation groups, vector invariants}
\begin{document}
\maketitle  
\begin{abstract} We consider a finite permutation group acting naturally on a vector space $V$ over a field $\kk$. A well known theorem of G\"obel asserts that the corresponding ring of invariants $\kk[V]^G$  is generated  by invariants of degree at most~$\binom{\dim V}{2}$. In this note we show that if the characteristic of $\kk$ is zero then the top degree of vector coinvariants~$\kk[V^m]_G$ is also bounded above by~$\binom{\dim V}{2}$, which implies the degree bound~$\binom{\dim V}{2}+ 1$ for the ring of vector invariants~$\kk[V^m]^G$. So G\"obel's bound almost holds for vector invariants in characteristic zero as well. 
\end{abstract}
\section{Introduction}

Let $G$ be a finite group, $\kk$ a field and $V$ a finite dimensional vector space over~$\kk$ on which $G$ acts. The action of $G$ on $V$ induces an action on the symmetric algebra $\kk[V]$ on  $V^*$ given by $gf(v)=f(g^{-1}v)$ for $g\in G$, $f\in \kk[V]$ and $v\in V$.  Let $\kk[V]^G$ denote the ring of invariant polynomials in  $\kk[V]$. This is a finitely generated graded subalgebra of $\kk[V]$ and a central goal in invariant theory is to determine $\kk[V]$ by computing generators and relations. We let $\beta (G,V)$ denote the maximal degree of a polynomial
in a minimal homogeneous generating set for $\kk[V ]^G$.   It is well known by \cite{MR1511848, MR1800251, MR1826990} that $\beta (G,V)\le |G|$ if $|G|\in \kk^*$. If the characteristic of $\kk$ divides $|G|$, then the invariant ring is more complicated and there is no bound that applies to all $V$. But it is possible to bound $\beta (G,V)$ using both $|G|$ and dimension of $V$ (see \cite{MR2811606}). The Hilbert ideal 
$I(G,V)$ is the ideal $\kk[V]_+^G \kk[V]$ in $\kk[V]$  generated by all invariants of positive degree. The algebra of coinvariants $\kk[V ]_G$  is the quotient ring $\kk [V]/I(G,V)$. Both Hilbert ideal and the algebra coinvarints are subjects of interest  as it is possible to extract information about the invariant ring from them. Since~$G$ is finite, $\kk[V ]_G$ is finite dimensional as a vector space and the highest degree in which $\kk[V]_G$ is non-zero is called the top degree of coinvariants.
This degree plays an important role in computing the invariant ring and is closely related to   $\beta (G,V)$ when $|G|\in \kk^*$ (see \cite{MR3282998}).

In this paper we study the case where $G$ is a permutation group acting naturally on $V$  by permuting a fixed basis of $V$. By a  well known theorem of G\"obel \cite{MR1339909}, $\beta (G,V)\le {n\choose 2}$, where $n$ is the dimension of $V$.
This bound applies in all characteristics and it is known to be sharp as for the alternating group $A_n$, we have $\beta (A_n,V) = {n\choose 2}$. Now we consider $m$ direct copies $V^m = V \oplus V  \oplus \ldots \oplus V$ of~$V$ with the action of~$G$ extended diagonally. We show that, if $\kk$ has characteristic zero, the top degree of the coinvariant ring $\kk[V^m ]_G$  is also bounded above by ${n\choose 2}$. Our method relies on polarizing polynomials in the Hilbert ideal $I(G,V)$ and obtaining enough monomials in $I(G,V^m)$ to bound the top degree of  $\kk[V^m]_G$. This implies that $\beta (G,V^m)\le {n\choose 2}+1$.  If polarization of a generating set for  $\kk[V ]^G$  gives a generating set for $\kk[V^m ]^G$, then a generating set for $I(G,V^m)$ can be obtained by polarizing any generating set for  
$I(G,V)$. But in general, one should not expect to get a Gr\"obner basis for $I(G,V^m)$  from a Gr\"obner basis for $I(G,V)$ by polarization.

\section{Polarization and the Hilbert ideal}
In this section we prove that to compute the leading monomial of a polarization of a polynomial it is sufficient to polarize the leading monomial of this polynomial.   We identify $\kk[V]$ with $\kk[x_1, \dots ,x_n]$ and $\kk[V^m]$  with $\kk[x^{(j)}_i \mid i = 1\upto n, j = 1 \upto m]$. We use lexicographic order on $\kk[V^m]$  with 
\[ x^{(1)}_1 > x^{(2)}_1 > \cdots > x^{(m)}_1 > x^{(1)}_2 > x^{(2)}_2 > \cdots > x^{(m)}_2 >  \cdots >  \cdots > x^{(m)}_n\] and the order on 
$\kk[V]$ is obtained by setting $m=1$. For an ideal $I$ we denote the lead term ideal of $I$ with $\Lead (I)$ and the leading monomial of a polynomial $f$ is denoted by $\LeadM (f).$  We introduce extra variables $t_1, \dots , t_m$ and define the algebra homomorphism 
\[ Phi: \kk[V] \to \kk[V^m][t_1 \upto t_m], \quad x_i \mapsto \sum_{j=1}^m x^{(j)}_i t_j.\]
For any $f\in \kk[V]$, we write  $$\Phi(f) = \sum_{(k_1, \dots ,k_m)\in \N^m } f_{k_1, \dots ,k_m} \ t_1^{k_1} \cdot \ldots \cdot t_m^{k_m},$$
with polynomials $f_{k_1, \dots ,k_m} \in \kk[V^m]$. This process is known as polarization and for an $m$-tuple $\underline{k}=(k_1, \dots ,k_m)$ let $\Pol_{\underline{k}}(f)$ denote the coefficient $f_{k_1, \dots ,k_m}$.  We set \[ \Pol(f) = \{ \Pol_{\underline{k}}(f) \mid \underline{k} \in \N^m, f_{\underline{k}}  \} \setminus \{ 0 \} .\]
The importance of polarization for invariant theory comes from the fact that $f \in \kk[V]^G$ implies $\Pol(f) \subseteq \kk[V^m]^G$. In addition, Kohls-Sezer \cite{MR3282998} observed that for every polynomial $f\in I(G,V)$ in the Hilbert ideal we have $\Pol(f)\subseteq I(G,V^m)$.
\begin{Lemma}
 Assume that $\charakt(\kk) = 0$. Let $M, M'$ be monomials in $\kk[V]$ with $M'<M$.   Then for any $\underline{k}\in \N^m$ we have   $$\LeadM (\Pol_{\underline{k}}(M'))<\LeadM (\Pol_{\underline{k}}(M))$$  
 with respect to the lexicographic order fixed above.
\end{Lemma}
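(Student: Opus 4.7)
The approach is to give an explicit combinatorial description of $\LeadM(\Pol_{\underline{k}}(M))$ as the monomial attached to a ``greedy'' integer matrix, and then to compare the greedy matrices for $M$ and $M'$. The only interesting case is $\sum_j k_j = \deg M = \deg M'$, for otherwise at least one of the two polarizations is zero and there is nothing to compare; a common degree is in fact forced as soon as both $\Pol_{\underline{k}}(M)$ and $\Pol_{\underline{k}}(M')$ are non-zero.

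First, write $M = x_1^{a_1}\cdots x_n^{a_n}$ and expand $\Phi(M)$ using the multinomial theorem on each factor $(\sum_j x_i^{(j)}t_j)^{a_i}$. Collecting the coefficient of $t_1^{k_1}\cdots t_m^{k_m}$ gives
\[
\Pol_{\underline{k}}(M)=\sum_{K}\Bigl(\prod_{i=1}^n \binom{a_i}{K_{i,1},\dots,K_{i,m}}\Bigr)\prod_{i,j}(x_i^{(j)})^{K_{i,j}},
\]
where $K=(K_{i,j})$ ranges over non-negative integer matrices with row sums $(a_i)$ and column sums $(k_j)$. Because $\chr(\kk)=0$ every multinomial coefficient is non-zero, and since distinct matrices yield distinct monomials, no term cancels. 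As the chosen variable order on $\kk[V^m]$ visits the cells $(i,j)$ in row-major order, $\LeadM(\Pol_{\underline{k}}(M))$ is the monomial coming from the unique matrix $K^{*}=K^{*}(M)$ obtained by filling the cells $(1,1),(1,2),\dots,(n,m)$ in that order via
\[
K^{*}_{i,j}=\min\bigl(a_i-\textstyle\sum_{j'<j}K^{*}_{i,j'},\ k_j-\sum_{i'<i}K^{*}_{i',j}\bigr);
\]
this is the standard lex-maximal vertex of the transportation polytope, and a short argument shows that it is always feasible.

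Next, let $i_0$ be the smallest index with $a'_{i_0}<a_{i_0}$, which exists because $M'<M$. For $i<i_0$ the greedy produces identical rows in both $K^{*}(M)$ and $K^{*}(M')$, so the column budgets entering row $i_0$ agree on a single vector $(\tilde k_1,\dots,\tilde k_m)$. Running the two greedies across row $i_0$, the entries coincide on the initial columns in which the column budget $\tilde k_j$ is binding; since
\[
\sum_{j}\tilde k_j=\sum_{i\ge i_0}a_i>a'_{i_0},
\]
there is a first column $j^{*}$ at which the row budget of $M'$ becomes binding. A brief case split, according to whether the row budget of $M$ is still larger than $\tilde k_{j^{*}}$ or not, shows $K^{*}(M)_{i_0,j^{*}}>K^{*}(M')_{i_0,j^{*}}$. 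Hence the two leading monomials first differ in the exponent of $x_{i_0}^{(j^{*})}$, with $M$'s exponent strictly larger, which is exactly the required strict lex inequality.

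The main obstacle is the careful bookkeeping in this comparison: one has to show that the greedy really produces the lex-maximal feasible matrix and that the divergence at $(i_0,j^{*})$ is strict. The characteristic-zero hypothesis enters only through the non-vanishing of multinomial coefficients; in positive characteristic the monomial attached to $K^{*}(M)$ might not appear in $\Pol_{\underline{k}}(M)$ at all, and the description of $\LeadM(\Pol_{\underline{k}}(M))$ would break down.
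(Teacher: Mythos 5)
Your proof is correct and takes essentially the same approach as the paper: your greedy recursion for $K^{*}_{i,j}$ is exactly the paper's recursive formula for the exponents $b_{i,j}$ of the leading monomial, the characteristic-zero hypothesis enters in the same way through the non-vanishing of the multinomial coefficient, and the row-by-row comparison at the first index where $M$ and $M'$ differ mirrors the paper's concluding argument. The only (harmless) difference is presentational: you phrase the leading term as the lex-maximal vertex of a transportation polytope and explicitly note the degree condition $\sum_j k_j=\deg M$, which the paper leaves implicit.
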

\begin{proof}
 Fix $\underline{k}=(k_1, \dots ,k_m)\in \N^m$ and let $M=x_1^{a_1} \cdots  x_n^{a_n} \in \kk[V].$ We have
 $$\Phi(M) =   (x^{(1)}_1 t_1 + \cdots + x^{(m)}_1 t_m)^{a_1} \cdots (x^{(1)}_n t_1 + \cdots + x^{(m)}_n t_m)^{a_n}.$$ 
 Let $h=\LeadM (\Pol_{\underline{k}}(M))$ and write $h= \prod_{1\le i\le n, 1\le j\le m}   \left(x^{(j)}_i\right)^{b_{i,j}}$. Note that $h$ contains with multiplicities $k_1$ variables from the first 
 summand of variables $\{x^{(1)}_1, \dots, x^{(1)}_n\}$. Since $x^{(1)}_1$ is the highest ranked variable among them we have $b_{1,1}=\min \{k_1, a_1\}$. More generally,  $h$ contains with multiplicities $k_j$ variables  from the set $\{x^{(j)}_1, \dots, x^{(j)}_n\}$.   Inductively, multiplicity of the $x^{(j)}_l$ is ${b_{l,j}}$ for $1\le l< i$. Moreover, out of $a_i$ factors $(x^{(1)}_i t_1 + \cdots + x^{(m)}_i t_m)^{a_i}$, $b_{i,l}$ of them contribute $x^{(l)}_i$ to $h$ for $1\le l<j$. Since $x^{(j)}_i$ is the highest rank  monomial in $\{x^{(j)}_i, \dots, x^{(j)}_n\}$, we get a recursive relation
 
\begin{equation}\label{eqHighestMonomExponents}
    b_{i,j} = \min \{k_j - \sum_{l=1}^{i-1} b_{l,j}, \ a_i - \sum_{l=1}^{j-1} b_{i,l}\}. 
\end{equation} 
Note that the coefficient of $h$ in $\Pol_{\underline{k}}(M)$is 
\begin{equation}\label{eqHighestCoeff} \prod_{1\le i\le n}{\frac{a_i!}{b_{i,1}!\cdots b_{i,m}!}}\end{equation}
which is non-zero because  $\charakt(\kk) = 0$.   We may take $M'= x_1^{a_1} \cdots x_{k-1}^{a_{k-1}} \cdot x_k^{a_k'} \cdot \ldots \cdot x_n^{a_n'}$ with $a_{k'}<a_k$. Set 
$h'=\LeadM (\Pol_{\underline{k}}(M'))$ and write $h= \prod_{1\le i\le n, 1\le j\le m}   \left(x^{(j)}_i\right)^{b'_{i,j}}$. As in the case for $b_{i,j}$,  $b'_{i,j}$ depends only on $k_l$ for $1\le l\le j$ and $a_l$ for $1\le l\le i$. Since the multiplicities of the variables $x_1, \dots ,x_{k-1}$ in $M$ and $M'$ are the same, we get that $b_{i,j}=b'_{i,j}$ for $i<k$. On the other hand since $\sum_{1\le l\le m} b_{k,l}=a_k> a_{k'}=\sum_{1\le l\le m} b'_{k,l}$, the equality $  b_{k,l}=  b'_{k,l}$ fails for some $1\le l\le m$. Let $j$ denote the smallest index such that $  b_{k,j}\neq  b'_{k,j}$. Then we have 
\begin{alignat*}{1}
b'_{k,j}&= \min \{k_j - \sum_{l=1}^{k-1} b'_{l,j}, \ a'_k - \sum_{l=1}^{j-1} b'_{k,l}\}\\
&=\min \{k_j - \sum_{l=1}^{k-1} b_{l,j}, \ a'_k - \sum_{l=1}^{j-1} b_{k,l}\}\\
&\le \min \{k_j - \sum_{l=1}^{k-1} b_{l,j}, \ a_k - \sum_{l=1}^{j-1} b_{k,l}\}\\
&=b_{k,j}
\end{alignat*}
So we get that $h'<h$.
\end{proof}
  \begin{rem}
  The coefficient of the highest ranked monomial $h$ which is given by  Equation \ref{eqHighestCoeff} is non-zero over all fields $\kk$ with $a_i!\in \kk^*$ for all $i$. So the assertion of the previous lemma is true for all pairs of monomials $M', M$ with $M=x_1^{a_1} \cdots  x_n^{a_n}$ satisfying $a_i < \charakt(\kk)$ for all $i$. 
  \end{rem}
  \begin{rem}
  Consider the lexicographic monomial order with a slightly different ordering of the variables \begin{equation*}\label{eqLexOrderB} x^{(1)}_1 > x^{(1)}_2 > \ldots > x^{(1)}_n > x^{(2)}_1 > x^{(2)}_2 > \ldots > x^{(2)}_n >  \ldots > \ldots > x^{(m)}_n .
\end{equation*} 
 For a fixed $1\le i\le n$ and $1\le j\le m$, the set of variables in $\{x^{(j)}_1, \dots, x^{(j)}_n\}$ and in  $\{x^{(1)}_i, \dots, x^{(m)}_i\}$ that is smaller than $x^{(j)}_i$ remains unchanged. So the recursive description of the $\LeadM  (\Pol_{\underline{k}}(M))$ in Equation \ref{eqHighestMonomExponents} and consequently the assertion of the lemma carry over to this ordering as well. 
 \end{rem}
  Since $\Pol(f)\subseteq I(G,V^m)$ for all
$f\in I(G,V)$ by \cite[Lemma 12]{MR3282998}, the previous lemma immediately  implies the following.
 \begin{prop}
  Assume that $\charakt(\kk) = 0$. Let $f\in I(G,V)$. Then we have $$\LeadM  (\Pol_{\underline{k}}(\LeadM (f)))\in \Lead (I(G,V^m))$$
  for all $\underline{k}\in \N^m$.
 \end{prop}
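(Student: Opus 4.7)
The plan is to combine the previous lemma with the Kohls–Sezer result \cite[Lemma 12]{MR3282998} in a direct way. The key point is that polarization is linear in $f$, so applying it to a sum of monomials only requires tracking the leading monomials under $\LeadM(\Pol_{\underline{k}}(\cdot))$.

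First I would write $f$ as a $\kk$-linear combination of monomials ordered by the fixed lex order, say
\[ f = c_0 M_0 + c_1 M_1 + \cdots + c_r M_r, \]
with $M_0 = \LeadM(f)$, $c_0 \neq 0$, and $M_i < M_0$ for $i \ge 1$. Since $\Phi$ is a $\kk$-algebra homomorphism and $\Pol_{\underline{k}}$ is just extraction of the coefficient of $t_1^{k_1}\cdots t_m^{k_m}$, both operations are $\kk$-linear on $f$, giving
\[ \Pol_{\underline{k}}(f) = c_0\,\Pol_{\underline{k}}(M_0) + \sum_{i=1}^{r} c_i\,\Pol_{\underline{k}}(M_i). \]

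Next I would identify the leading monomial of the right-hand side. By the previous lemma, $\LeadM(\Pol_{\underline{k}}(M_i)) < \LeadM(\Pol_{\underline{k}}(M_0))$ for every $i \ge 1$. Moreover, the proof of that lemma exhibits the coefficient of $\LeadM(\Pol_{\underline{k}}(M_0))$ in $\Pol_{\underline{k}}(M_0)$ as the explicit multinomial expression in Equation \eqref{eqHighestCoeff}, which is nonzero because $\charakt(\kk)=0$. Hence the term $c_0\,\Pol_{\underline{k}}(M_0)$ contributes a nonzero coefficient at $\LeadM(\Pol_{\underline{k}}(M_0))$, and no lower contribution can cancel or surpass it. Therefore
\[ \LeadM(\Pol_{\underline{k}}(f)) = \LeadM(\Pol_{\underline{k}}(M_0)) = \LeadM(\Pol_{\underline{k}}(\LeadM(f))). \]

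Finally I would invoke \cite[Lemma 12]{MR3282998}: since $f \in I(G,V)$, we have $\Pol_{\underline{k}}(f) \in \Pol(f) \cup\{0\} \subseteq I(G,V^m)$. The leading monomial of any element of $I(G,V^m)$ lies in $\Lead(I(G,V^m))$, so the displayed equality yields the claim. The only nontrivial point in this chain is the non-cancellation of the leading term of $c_0\,\Pol_{\underline{k}}(M_0)$, which is the step genuinely requiring characteristic zero; everything else is bookkeeping once the lemma and the Kohls–Sezer result are in hand.
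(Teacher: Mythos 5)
Your argument is correct and is exactly the reasoning the paper has in mind: it states the proposition as an immediate consequence of the preceding lemma together with \cite[Lemma 12]{MR3282998}, and your write-up simply makes explicit the linearity of $\Pol_{\underline{k}}$ and the non-cancellation of the leading term (guaranteed by the nonvanishing of the multinomial coefficient in characteristic zero). No genuinely different route is taken, and the only detail worth a passing remark is that terms $M_i$ with $\Pol_{\underline{k}}(M_i)=0$ (e.g.\ when $\deg M_i$ differs from $k_1+\cdots+k_m$) should be discarded before invoking the lemma's inequality.
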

 We now prove our main result. 
 \begin{Theorem}  Let $G \leq S_n$ be a permutation group acting naturally on $V = \kk^n$. Assume that $\charakt(\kk)=0$  or $\charakt(\kk)>n$. Then we have $$\beta (G,V^m)\le {n\choose 2}+1.$$
 \end{Theorem}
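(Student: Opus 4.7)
The plan is to bound the top degree of the vector coinvariants $\kk[V^m]_G$ by $\binom{n}{2}$, then deduce $\beta(G,V^m) \le \binom{n}{2}+1$ via the standard implication relating the top degree of coinvariants to $\beta$ in the non-modular setting. Our characteristic hypothesis forces $\chr(\kk) \nmid |G|$ (since $|G|$ divides $n!$), so this implication from \cite{MR3282998} is available. The real work is therefore to show that every monomial $N \in \kk[V^m]$ of total degree $d > \binom{n}{2}$ lies in the monomial ideal $\Lead(I(G,V^m))$.

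Since $\kk[V]^{S_n} \subseteq \kk[V]^G$ we have $I(S_n,V) = (e_1,\ldots,e_n) \subseteq I(G,V)$. A characteristic-free Gr\"obner basis computation (equivalently, Artin's description of the $S_n$-coinvariants, whose Hilbert series is $[n]_q!$) gives $\Lead(I(S_n,V)) = (x_1, x_2^2, \ldots, x_n^n)$ in the fixed lex order, so for each $i \in \{1, \ldots, n\}$ there exists some $f_i \in I(G,V)$ with $\LeadM(f_i) = x_i^i$. Because the only exponent appearing in $x_i^i$ is $i \le n$, which is strictly less than $\chr(\kk)$ by hypothesis, the preceding Proposition applies (via the extension of the Lemma noted in the first Remark) and yields
\[
\LeadM\bigl(\Pol_{\underline{k}}(x_i^i)\bigr) \in \Lead(I(G,V^m)) \quad \text{for all } \underline{k} \in \N^m.
\]
A direct expansion of $\Phi(x_i^i) = \bigl(\sum_{j=1}^m x_i^{(j)} t_j\bigr)^i$ shows that $\Pol_{\underline{k}}(x_i^i)$ equals $\binom{i}{k_1,\ldots,k_m}\prod_{j=1}^m (x_i^{(j)})^{k_j}$ whenever $\sum_j k_j = i$, a single monomial with nonzero scalar coefficient. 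Hence $\prod_{j=1}^m (x_i^{(j)})^{k_j} \in \Lead(I(G,V^m))$ whenever $i \le n$ and $\sum_{j=1}^m k_j = i$.

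Now take any monomial $N = \prod_{i,j} (x_i^{(j)})^{n_{i,j}}$ of total degree $d > \binom{n}{2}$, and set $a_i := \sum_{j=1}^m n_{i,j}$. Since $\sum_{i=1}^n (i - 1) = \binom{n}{2} < d = \sum_i a_i$, some index $i_0$ must satisfy $a_{i_0} \ge i_0$. Choose nonnegative integers $k_1, \ldots, k_m$ with $k_j \le n_{i_0, j}$ and $\sum_j k_j = i_0$, possible precisely because $a_{i_0} \ge i_0$. Then $\prod_j (x_{i_0}^{(j)})^{k_j}$ divides $N$ and lies in $\Lead(I(G,V^m))$, so $N$ itself lies in this monomial ideal. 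Hence the top degree of $\kk[V^m]_G$ is at most $\binom{n}{2}$, and the desired bound $\beta(G,V^m) \le \binom{n}{2}+1$ follows.

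The main obstacle is showing that the sparse family of polarized leading terms $\prod_j (x_i^{(j)})^{k_j}$, coming from polarizing only the single-variable generators $x_i^i$ of $\Lead(I(S_n,V))$, is already rich enough to cover every monomial $N$ of degree exceeding $\binom{n}{2}$. The pigeonhole observation that some row sum $a_{i_0}$ must reach $i_0$ is what delivers this coverage; once that is in place, the divisibility step concludes the argument immediately.
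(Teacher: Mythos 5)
Your proof is correct and follows essentially the same route as the paper: polarize the generators $x_i^i$ of $\Lead(I(S_n,V))$, note the multinomial coefficient is invertible since $i\le n<\charakt(\kk)$ (or $\charakt(\kk)=0$), and conclude that every monomial of degree exceeding $\binom{n}{2}$ lies in $\Lead(I(G,V^m))$. You merely make explicit two steps the paper leaves compressed, namely the pigeonhole/divisibility argument and the appeal to the non-modular transfer argument for passing from the top degree of the coinvariants to $\beta(G,V^m)$.
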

  \begin{proof}
  A Gr\" obner basis for $I(S_n,V)$ has been computed in \cite{ohsugiwada}. From this source we get that
  $\Lead (I(S_n,V))=(x_1, x_2^2, \dots , x_n^n)$. Since $I(S_n,V)\subseteq I(G,V)$, we get $(x_1, x_2^2, \dots , x_n^n)\subseteq \Lead (I(G,V))$. For $1\le i\le n$  and the vector   $\underline{k}=(k_1, \dots ,k_m)\in \N^m$ with $k_1+k_2+\cdots +k_m=i$,  we consider $\Pol_{\underline{k}}(x_i^i))$. Since $x_i^i\in \Lead (I(G,V))$,  the previous proposition yields $\LeadM (\Pol_{\underline{k}}(x_i^i))\in  \Lead (I(G,V^m))$. To identify  $\LeadM (\Pol_{\underline{k}}(x_i^i))$ we write
   $\Phi(x_i^i) =   (x^{(1)}_i t_1 + \cdots + x^{(m)}_i t_m)^{i}$. So we get  $$\Pol_{\underline{k}}(x_i^i))= \frac{i!}{k_1! \cdots k_m!} \prod_{j=1}^m \left(x^{(j)}_i \right)^{k_j}.$$
   Note that since $i\le n$, the coefficient is non-zero. It follows that $\Lead (I(G,V^m))$ contains the set of monomials
   $$\{\prod_{j=1}^m \left(x^{(j)}_i \right)^{k_j} \mid 1\le i\le n, \; \sum_{j=1}^m k_j=i 
   \}.$$
   Therefore, the top degree of the coinvariants $\kk[V^m]_G$ is bounded above by ${n\choose 2}$ as well. This implies that  $I(G,V^m)$ is generated by polynomials of degree at most ${n\choose 2}+1$. Now we apply a standard argument to get a bound for  $\beta (G,V^m)$ as follows. Let $f_1, \dots , f_s$ be generators for $I(G,V^m)$ of degree at most ${n\choose 2}+1$. We may assume these generators lie in $\kk[V^m]^G$. Let $f\in \kk[V^m]^G$ with degree $>{n\choose 2}+1$. Write 
   $f=\sum_{i=1}^s q_if_i$ with $q_i\in \kk[V^m]_+$. Let $Tr:\kk[V^m]\rightarrow \kk[V^m]^G$ denote the transfer map defined by $\Tr (h)=\sum_{\sigma \in G}\sigma (h)$ for $h\in \kk[V^m] $. Then we have $\Tr (f)=|G|f=\sum_{i=1}^s \Tr(q_i)f_i.$ Therefore $f$ is in the algebra generated by invariants of strictly smaller degree. So we get   $\beta (G,V^m)\le {n\choose 2}+1$ as desired.
   \end{proof}
\bibliographystyle{plain}
\bibliography{muf}

\end{document}